\newtheorem{thm}{Theorem}
\newtheorem{lem}[thm]{Lemma}
\numberwithin{equation}{section}
\def\BC{\mathbb C}
\def\BN{\mathbb N}
\def\BR{\mathbb R}
\def\cD{\mathcal D}
\def\rd{\mathrm d}
\def\rRe{\mathrm{Re}}
\def\Ga{\Gamma}
\def\De{\Delta}
\def\La{\Lambda}
\def\Om{\Omega}
\def\al{\alpha}
\def\be{\beta}
\def\ve{\varepsilon}
\def\te{\theta}
\def\ze{\zeta}
\def\ka{\kappa}
\def\la{\lambda}
\def\si{\sigma}
\def\vp{\varphi}
\def\f{\frac}
\def\ov{\overline}
\def\pa{\partial}
\def\un{\underline}
\def\wt{\widetilde}
\title{Blow-up for time-fractional diffusion equations
with superlinear convex semilinear terms}
\author{Xinchi Huang\thanks{Institute for Innovative Research,
Tokyo Institute of Technology, 2-12-1 Ookayama, Meguro-ku,
Tokyo 152-8550, Japan. E-mail: {\tt huang.x.ag@m.titech.ac.jp}}\quad
Yikan Liu\thanks{Department of Mathematics, Kyoto University,
Kitashirakawa-Oiwakecho, Sakyo-ku, Kyoto 606-8502, Japan.
E-mail: {\tt liu.yikan.8z@kyoto-u.ac.jp}}\quad
Masahiro Yamamoto\thanks{Graduate School of Mathematical Sciences,
The University of Tokyo, Komaba, Meguro-ku, Tokyo 153-8914, Japan;
Honorary Member of Academy of Romanian Scientists, Ilfov, nr. 3, Bucuresti,
Romania; Correspondence member of Accademia Peloritana dei Pericolanti,
Palazzo Universit\`a, Piazza S. Pugliatti 1 98122 Messina Italy.
E-mail: {\tt myama@ms.u-tokyo.ac.jp}}}
\date{}
\begin{document}

\maketitle

\baselineskip 18pt

\begin{abstract}
This article is concerned with a semilinear time-fractional diffusion equation
with a superlinear convex semilinear term in a bounded domain $\Om$
with the homogeneous Dirichlet, Neumann, Robin boundary conditions 
and non-negative and not identically vanishing initial value.  
The order of the fractional derivative in time
is between $1$ and $0$, and the elliptic part is with time-independent 
coefficients.  We prove
\begin{enumerate}
\item[(i)] The solution with any initial value blows up if the eigenvalue $\la_1$ 
of the elliptic operator with the minimum real part is non-positive.
\item[(ii)] Otherwise, the solution blows up if a weighted $L^1$-norm of initial value
is greater than some critical value give by $\la_1$.
\end{enumerate}
We provide upper estimates of the blow-up times.
The key is a comparison principle for time-fractional ordinary differential 
equations. \medskip

\noindent{\bf Keywords:} Semilinear time-fractional diffusion equation,
superlinear and convex nonlinearity, blow-up\medskip

\noindent{\bf Mathematics Subject Classification:} 35R11, 35K58, 35B44
\end{abstract}

\section{Introduction and Main Results}

Let $d=1,2,3$ and $\Om\subset\BR^d$ be a bounded domain
with smooth boundary $\pa\Om$ and let $\nu=(\nu_1(x),\dots,\nu_d(x))$
be the unit outward normal vector to $\pa\Om$ at
$x\in\pa\Om$. 

We define
$$
-A v(x)=\sum_{i,j=1}^d\pa_i(a_{i j}(x)\pa_j v(x)) 
+\sum_{j=1}^d b_j(x)\pa_j v(x)+c(x)v(x),
$$
where $a_{i j}=a_{j i}\in C^2(\ov\Om)$, $b_j,c\in C(\ov\Om)$ for 
$1\le i,j\le d$ and there exists a constant $\ka>0$ such that 
$$
\sum_{i,j=1}^d a_{i j}(x)\xi_i\xi_j\ge\ka\sum_{j=1}^d\xi_j^2
$$
for all $x\in\ov\Om$ and $\xi_1,\dots,\xi_d\in\BR$.
Let 
$$
\pa_{\nu_A}v(x):=\sum_{i,j=1}^d a_{i j}(x)(\pa_i v(x))\nu_j(x),
\quad x\in\partial\Om.
$$

For $0<\al<1$, let $\rd_t^\al$ denote the classical Caputo derivative:
$$
\rd_t^\al f(t):=\int^t_0\f{(t-s)^{-\al}}{\Ga(1-\al)}
f'(s)\,\rd s,\quad f\in W^{1,1}(0,T).
$$
Here $\Ga(\,\cdot\,)$ denotes the gamma function.

In order to discuss semilinear time-fractional
diffusion equations in suitable function spaces, 
we extend the classical Caputo derivative
$\rd_t^\al$ as follows. First, 
we define the Sobolev-Slobodecki space $H^\al(0,T)$ 
with the norm $\|\cdot\|_{H^\al(0,T)}$:
$$
\|f\|_{H^\al(0,T)}:=\left(\|f\|^2_{L^2(0,T)}+
\int^T_0\!\!\!\int^T_0\f{|f(t)-f(s)|^2}{|t-s|^{1+2\al}}
\,\rd s\rd t\right)^{\f12}, \quad 0<\alpha<1
$$
(e.g., Adams \cite{Ad}). Furthermore, we set $H^0(0,T):=L^2(0,T)$ and
$$
H_\al(0,T):=
\left\{\!\begin{alignedat}{2}
& H^\al(0,T), & \quad & 0<\al<\f12,\\
&\left\{f\in H^{\f12}(0,T);\,\int^T_0\f{|f(t)|^2}t\,\rd t <\infty\right\},
& \quad & \al=\f12,\\
& \{f\in H^\al(0,T);\, f(0)=0\}, & \quad &\f12<\al\le1
\end{alignedat}\right.
$$
with the norms defined by 
$$
\|f\|_{H_\al(0,T)}:=
\left\{\!\begin{alignedat}{2}
&\|f\|_{H^\al(0,T)}, & \quad & \al\ne\f12,\\
&\left(\|f\|^2_{H^{\f12}(0,T)}+\int^T_0\f{|f(t)|^2}t\,\rd t\right)^{\f12}, 
& \quad & \al=\f12.
\end{alignedat}\right.
$$
Moreover, for $\be>0$, we set 
$$
J^\be f(t):=\int^t_0\f{(t-s)^{\be-1}}{\Ga(\be)}f(s)\,\rd s,
\quad0<t<T,\ f\in L^1(0,T).
$$
It was proved e.g.,\! in Gorenflo, Luchko and Yamamoto \cite{GLY} that
$J^\al:L^2(0,T)\longrightarrow H_\al(0,T)$ is an isomorphism for $\al\in(0,1)$.
Now we are ready to define the extended Caputo derivative
$$
\pa_t^\al:=(J^\al)^{-1},\quad\cD(\pa_t^\al)=H_\al(0,T).
$$
Henceforth $\cD(\,\cdot\,)$ denotes the domain of an operator 
under consideration.
Then it is proved that 
$\pa_t^\al$ is the minimum closed extension of $\rd_t^\al$ with
$\cD(\rd_t^\al):=\{v\in C^1[0,T];\, v(0)=0\}$ and
$\pa_t^\al v=\rd_t^\al v$ for $v\in C^1[0,T]$ satisfying
$v(0)=0$. As for the details, we can refer to
Gorenflo, Luchko and Yamamoto \cite{GLY}, 
Kubica, Ryszewska and Yamamoto \cite{KRY}, Yamamoto \cite{Y2023}.

In this article, we consider the following initial-boundary value problems 
for a semilinear time-fractional diffusion equation:
\begin{equation}\label{1.1}
\begin{cases}
\pa_t^\al(u-a)=-A u+f(u) &\mbox{in }\Om\times(0,T),\\
u=0 &\mbox{on }\pa\Om\times(0,T)
\end{cases}
\end{equation}
and
\begin{equation}\label{1.2}
\begin{cases}
\pa_t^\al(u-a)=-A u+f(u) &\mbox{in }\Om\times(0,T),\\
\pa_{\nu_A}u+\si u=0 &\mbox{on }\pa\Om\times(0,T),
\end{cases}
\end{equation}
where $f\in C^1[0,\infty)$, $f\ge0$ in $[0,\infty)$ and 
$a \ge 0$ in $\Omega$.  Moreover for simplicity, we assume 
$\sigma \in C^{\infty}(\partial\Omega)$, although we can relax
such regularity.

We define the domain of the operator $A$ by 
$$
\cD(A)=\begin{cases}
\{v\in H^2(\Om);\, v=0\mbox{ on }\pa\Om\} &\mbox{for \eqref{1.1}},\\
\{v\in H^2(\Om);\, \pa_{\nu_A}v+\si v=0\mbox{ on }\pa\Om\} 
&\mbox{for \eqref{1.2}}
\end{cases}
$$
and $\|v\|_{\cD(A)}:=\|v\|_{H^2(\Om)}$ for both cases $v=0$ and
$\pa_{\nu_A}v+\si v=0$ on $\pa\Om$.

The left-hand side of the time-fractional differential equation
in \eqref{1.1} and \eqref{1.2} means that $u(x,\,\cdot\,)-a(x)\in H_\al(0,T)$
for almost all $x\in\Om$. Especially for $\f12<\al<1$, noting that
$v\in H_\al(0,T)$ implies $v(0)=0$ by the trace theorem, we can
understand that the left-hand side means that $u(x,0)=a(x)$
in the trace sense with respect to $t$. Thus, this corresponds to the
initial condition for $\al>\f12$, while we do not need any
initial conditions for $\al<\f12$.
There are other formulations for initial-boundary value problems
for time-fractional partial differential equations
(e.g., Sakamoto and Yamamoto \cite{SY}, Zacher \cite{Za}), but
here we do not provide comprehensive references.

For each of \eqref{1.1} and \eqref{1.2}, we can prove that 
for each $a\in\cD(A)$,
we can find $T_a>0$ such that there exists a unique solution 
\begin{equation}\label{1.3}
u\in C([0,T_a];\cD(A))\quad\mbox{satisfying }
u-a\in H_\al(0,T_a;L^2(\Om))
\end{equation}
(e.g., Jin \cite{Jin}, Luchko and Yamamoto \cite[Theorem 5.1]{LY2}).
More precisely, the value $T_a$ is not dependent on the choice of $a$,
but on an upper bound of $\| a\|_{\cD(A)}$.
In this article, we are mainly concerned with the non-existence of
global solution in time to \eqref{1.1} or \eqref{1.2} within the class \eqref{1.3}.

In the case of $\al=1$, concerning the non-existence of
global solutions in time, there have been enormous works since
Fujita \cite{F}, and we can refer to a comprehensive
monograph by Quittner and Souplet \cite{QS}.  See also
Fujishima and Ishige \cite{FI}, Ishige and Yagisita \cite{IshiY}.

For $0<\al<1$, the time-fractional diffusion equation in \eqref{1.1} and 
\eqref{1.2}
is a possible model for describing anomalous diffusion
in heterogeneous media.
There are very rapidly increasing interests also on semilinear
time-fractional differential equations, and so here we refer to only
a few works: Ahmad et al.\! \cite{AAAKT},
Borikhanov, Ruzhansky and Torebek \cite{BRT23},
Floridia, Liu and Yamamoto \cite{FLY},
Ghergu, Miyamoto and Suzuki \cite{GMS23},
Hnaien, Kellil, and Lassoued \cite{HKL},
Kirane, Laskri and Tatar \cite{KLT}, Kojima \cite{K23},
Suzuki \cite{S21,S22}, Vergara and Zacher \cite{VZ},
Zhang and Sun \cite{ZS15}.

Our approach is based on the comparison of solutions to
initial value problems for time-fractional ordinary
differential equations, which generalizes \cite{FLY}.
Such a method can date back to Kaplan \cite{Kp} and see also Payne \cite{Pa}.
The work \cite{VZ} discusses stability, instability and
blow-up for more general semilinear time-fractional diffusion equations, while
in our article, we construct lower solutions for the blow-up
solution to \eqref{1.1} and \eqref{1.2}.
To the best knowledge of the authors, there are no publications
providing an upper estimate of the blow-up times.

Throughout this article, we assume:
\begin{gather}
f\in C^1[0,\infty),\quad f\ge0,\quad
f\mbox{ is convex},\label{1.4}\\
\mbox{there exist constants $c_0>0$ and $p>1$ such that 
$f(\xi)\ge c_0\xi^p$ for all $\xi\ge1$}.\label{1.5}
\end{gather}
By $f\ge0$, we can apply the comparison principle
(e.g., \cite{LY1,LY2}) to verify that $u\ge0$ in $\Om\times(0,T)$ if
$a\ge0$ in $\Om$, where $u$ is the solution to \eqref{1.1} or \eqref{1.2}. 
In this article, we consider only non-negative solutions.

It is reasonable that we assume \eqref{1.5} with $p>1$.
Indeed, if we assume that there exist constants $C_1>0$ and 
$0<p<1$ such that $0\le f(\xi) \le C_1\xi^p$ for all
$\xi \ge 0$, then we can prove the global existence of solution to
\eqref{1.1} and \eqref{1.2}, that is, for arbitrarily given $T>0$, there
exists a solution $u$ to each of \eqref{1.1} and \eqref{1.2}.
The proof relies on a usual argument by the fixed point theorem 
and is simiar to \cite[Theorem 2]{FLY}), and so we omit the details.

In this article we are concerned with the blow-up. We set
\begin{equation}\label{1.6}
T_a:=\sup\{t>0; \, \|u(\,\cdot\,,t)\|_{L^1(\Om)}<\infty\}
\end{equation}
and we call $T_a>0$ the blow-up time.
By the local existence of the solution (e.g., \cite{LY1}), we see that
$T_a>0$ exists. Meanwhile, by the definition we see that
\[
\limsup_{t\uparrow T_a}\|u(\,\cdot\,,t)\|_{L^1(\Om)}=\infty.
\]

For the statement of the main result, we further introduce
$$
(-A^*v)(x)=\sum_{i,j=1}^d\pa_i(a_{i j}(x)\pa_j v(x))
-\sum_{j=1}^d\pa_j(b_j(x)v(x))+c(x)v(x)
$$
with
$$
\cD(A^\ast)=\begin{cases}
\{v\in H^2(\Om);\, v=0\mbox{ on }\pa\Om\} & \mbox{for \eqref{1.1}},\\
\{v\in H^2(\Om);\, \pa_{\nu_A}v+(\si-\sum_{j=1}^d b_j\nu_j) v=0\mbox{ on }\pa\Om\} 
& \mbox{for \eqref{1.2}}.
\end{cases}
$$
Then it is known that $A^*$ has a principal eigenvalue $\la_1$
such that $\BR\ni\la_1\le\rRe\,\la$ for any other eigenvalue
$\la\in\BC$ of $A^*$, and the corresponding eigenfunction
does not change the sign. Hence we can choose $\vp_1(x)$ satisfying
\begin{equation}\label{1.7}
A^*\vp_1=\la_1\vp_1,\quad\vp_1>0\mbox{ in }\Om,\quad
\int_\Om\vp_1(x)\,\rd x=1
\end{equation}
and we can refer for example,  
to Pao \cite[Theorem \ref{thm1}.2 (p.97)]{Pao}.
In the case of the Dirichlet boundary condition, see
Evans \cite[Theorem 3 (p.361)]{E}, Gilbarg and Trudinger
\cite[Theorem 8.38 (p.214)]{GT}, and
Pao \cite[Theorem \ref{thm1}.2 (p.97)]{Pao}. 
In these references, the zeroth-order
coefficient $c(x)$ is assumed to be non-positive, but this condition is not
essential because we can replace the eigenvalue problem
$A^*\vp=\la\vp$ by $(A^*+2\|c\|_{C(\ov\Om)})\vp
=\wt\la\vp$, where $c(x)$ is replaced 
by $c(x)-2\|c\|_{C(\ov\Om)}$. We see that $\vp_1\in H^2(\Om)
\subset C(\ov\Om)$ by the Sobolev embedding, because the spatial dimensions
$d\le3$.

We assume that 
\begin{equation}\label{1.8}
a\in\cD(A),\quad a\ge0,\,\not\equiv0\quad\mbox{in }\Om
\end{equation}
and set
$$
a_0:=\int_\Om a(x)\vp_1(x)\,\rd x.
$$
Then by \eqref{1.7}, we see that $a_0>0$.
   
Now we are ready to state our main result.

\begin{thm}\label{thm1}
{\bf (i)} If $\la_1\le0,$ then there exists a constant $T=T_a>0$
such that a solution to each of \eqref{1.1} and \eqref{1.2}
within the class \eqref{1.3} exists for $0<t<T_a,$
and \eqref{1.6} holds. Moreover$,$ $T_a$ is bounded from above as
\[
T_a\le T_a^*:=\left\{(p-1)\Ga(2-\al)\,c_0a_0^{p-1}\right\}^{-\f1\al}.
\]

{\bf (ii)} If $\la_1>0,$ we further assume
\begin{equation}\label{eq-asp-a0}
a_0>\left(\f{\la_1}{c_0}\right)^{\f1{p-1}}.
\end{equation}
Then the same conclusion as in case {\rm(i)} holds and
\[
T_a\le T^*_a:=
\left\{(p-1)\Ga(2-\al)(c_0a_0^{p-1}-\la_1)\right\}^{-\f1\al}.
\]
Here $p$ and $c_0$ are the constants given in \eqref{1.5}.
\end{thm}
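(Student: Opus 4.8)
The plan is to collapse the PDE to a scalar time-fractional differential inequality by Kaplan's eigenfunction method and then to force finite-time blow-up of the scalar quantity by comparison with an explicitly constructed lower solution. Setting $U(t):=\int_\Om u(x,t)\vp_1(x)\,\rd x$, I multiply the equation in \eqref{1.1} or \eqref{1.2} by $\vp_1$ and integrate over $\Om$. Since $\vp_1$ is time-independent, $\pa_t^\al$ commutes with $\int_\Om(\,\cdot\,)\vp_1\,\rd x$, producing $\pa_t^\al(U-a_0)$ on the left. For the elliptic term, Green's formula together with the boundary conditions built into $\cD(A)$ and $\cD(A^*)$ kills the boundary contribution, so \eqref{1.7} gives $\int_\Om(-Au)\vp_1\,\rd x=-\la_1 U$. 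Because $\vp_1\ge0$ with $\int_\Om\vp_1\,\rd x=1$ is a probability density and $f$ is convex, Jensen's inequality yields $\int_\Om f(u)\vp_1\,\rd x\ge f(U)$. Combining these and using \eqref{1.5}, I obtain $\pa_t^\al(U-a_0)\ge f(U)-\la_1 U\ge c_0U^p-\la_1 U$, with $U(0)=a_0$.

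Next I reduce both cases to the model inequality $\pa_t^\al(U-a_0)\ge\tilde c\,U^p$. In case (i), $\la_1\le0$ makes $-\la_1U\ge0$, so $\tilde c=c_0$, and applying the positive operator $J^\al$ to $\pa_t^\al(U-a_0)\ge0$ gives $U\ge a_0$. In case (ii), \eqref{eq-asp-a0} makes the forcing positive at $U=a_0$, which again yields $U\ge a_0$; then for $U\ge a_0$ I write $c_0U^p-\la_1U=(c_0-\la_1U^{1-p})U^p\ge(c_0-\la_1a_0^{1-p})U^p$, so $\tilde c:=(c_0a_0^{p-1}-\la_1)a_0^{1-p}>0$. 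These two values of $\tilde c$ are precisely the factors appearing in the two formulas for $T_a^*$. The growth bound \eqref{1.5} is only available once $U\ge1$, which is immediate when $a_0\ge1$; otherwise one first uses the positive forcing to drive $U$ across the threshold before applying the estimate below.

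I would then invoke the comparison principle for time-fractional ODEs announced in the introduction: the supersolution $U$ dominates any lower solution $\un u$ of the equality problem $\pa_t^\al(\un u-a_0)=\tilde c\,\un u^p$, so it suffices to exhibit an explicit $\un u$ that blows up at the claimed time. I would try the self-similar profile $\un u(t)=a_0\bigl(1-(t/T^*)^\al\bigr)^{-1/(p-1)}$, which satisfies $\un u(0)=a_0$ and diverges as $t\uparrow T^*$, and verify $\pa_t^\al(\un u-a_0)\le\tilde c\,\un u^p$ on $(0,T^*)$. Writing $\pa_t^\al$ in Caputo form and rescaling $s=T^*\rho$ turns this into a Beta-type integral inequality whose extremal point fixes the largest admissible $T^*$ and produces the constant $\Ga(2-\al)$; reading off the blow-up time of $\un u$ and using $\|u(\,\cdot\,,t)\|_{L^1(\Om)}\ge\|\vp_1\|_{L^\infty(\Om)}^{-1}U(t)$ then gives $T_a\le T_a^*$.

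I expect the verification of this last inequality to be the main obstacle: unlike the classical case $\al=1$, the Caputo derivative of the singular profile has no elementary closed form, and the memory term must be controlled uniformly up to the blow-up time in order to pin down the exact constant $\Ga(2-\al)$. A cleaner, though less explicit, route would bypass the profile altogether: applying the convexity inequality $\pa_t^\al H(U)\le H'(U)\,\pa_t^\al U$ with $H(\xi)=\xi^{1-p}$ to $w:=U^{1-p}$ gives $\pa_t^\al w\le-(p-1)\tilde c$, whence one application of $J^\al$ yields $w(t)\le a_0^{1-p}-\tfrac{(p-1)\tilde c}{\Ga(\al+1)}t^\al$; since $w>0$ this forces blow-up by a time carrying the constant $\Ga(\al+1)$, and as $\Ga(\al+1)\Ga(2-\al)\le1$ this already implies the stated estimate $T_a\le T_a^*$.
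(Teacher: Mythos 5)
Your Step 1 (Kaplan's eigenfunction method plus Jensen's inequality) is exactly the paper's Step 1, and your reduction to the scalar inequality $\pa_t^\al(U-a_0)\ge\tilde c\,U^p$ with $\tilde c=c_0$ resp. $\tilde c=(c_0a_0^{p-1}-\la_1)a_0^{1-p}$ reproduces precisely the constants in both formulas for $T_a^*$. The divergence is in the scalar blow-up step. The paper takes your route (a): it chooses the lower solution $\un\eta(t)=a_0(T/(T-t))^{1/(p-1)}$ and disposes of the ``main obstacle'' you identify by a Maclaurin-series computation (Lemma \ref{lem3}) giving $\rd_t^\al\bigl((T/(T-t))^q\bigr)\le\f{q}{\Ga(2-\al)T^\al}(T/(T-t))^{q+1}$, after which the comparison principle (Lemma \ref{lem2}) yields $\un\eta\le\eta$; as written, your route (a) stalls exactly where this lemma does the work, and your profile $(1-(t/T^*)^\al)^{-1/(p-1)}$ would in fact be harder to differentiate fractionally than the paper's. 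Your route (b), however, is a genuinely different and viable argument: it is the convexity (energy) method of Vergara--Zacher \cite{VZ}, which the paper explicitly contrasts with its own lower-solution construction. The identity $\Ga(1+\al)\Ga(2-\al)=\pi\al(1-\al)/\sin(\pi\al)\le1$ is correct, so the blow-up bound $\{\Ga(1+\al)/((p-1)\tilde c\,a_0^{p-1})\}^{1/\al}$ you would obtain is sharper than $T_a^*$ and implies the theorem; what route (b) buys is the avoidance of any explicit profile, at the price of needing the fractional chain-rule inequality $\pa_t^\al\bigl(H(U)-H(a_0)\bigr)\le H'(U)\,\pa_t^\al(U-a_0)$ for convex $H$, which you must justify for $U$ with only $U-a_0\in H_\al(0,T)\cap C[0,T]$ (the standard proofs assume $W^{1,1}$ regularity or a regularized kernel), whereas the paper's Lemmas \ref{lem1}--\ref{lem2} are proved in exactly the regularity class at hand. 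Finally, the threshold issue you flag --- \eqref{1.5} gives $f(\xi)\ge c_0\xi^p$ only for $\xi\ge1$, while $U\ge a_0$ need not exceed $1$ --- is real, but the paper's own lower solution has the same feature, so it is not a defect specific to your argument; your claim that the forcing is positive at $U=a_0$ under \eqref{eq-asp-a0} alone is, strictly speaking, unjustified when $a_0<1$ for the same reason.
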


Theorem \ref{thm1} generalizes the result for the case $\al=1$
(i.e., the classical parabolic equation) which is found for example in 
\cite[Theorem 17.1 (p.104)]{QS}.
 
This article is composed of three sections.
In Section \ref{sec-pre}, we show three key lemmata.
In Section \ref{sec-proof}, we complete the proof of Theorem \ref{thm1}.

\section{Preliminaries}\label{sec-pre}

We will show the following three lemmata.

\begin{lem}\label{lem1}
Let $f\in L^2(0,T)$ and $c\in C[0,T]$.
Then there exists a unique solution $y\in H_\al(0,T)$ to
$$
\pa_t^\al y-c(t)y=f(t),\quad0<t<T.
$$
Moreover, if $f\ge0$ in $(0,T)$, then $y\ge0$ in $(0,T)$.
\end{lem}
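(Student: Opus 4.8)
The plan is to recast the fractional ODE as a Volterra integral equation of the second kind, solve it by a Neumann series to get existence and uniqueness, and then establish non-negativity by a shift argument combined with the positivity of the Mittag-Leffler resolvent kernel.

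First, since $J^\al:L^2(0,T)\to H_\al(0,T)$ is an isomorphism and $\pa_t^\al=(J^\al)^{-1}$, a function $y\in H_\al(0,T)$ if and only if $y=J^\al g$ for a (unique) $g\in L^2(0,T)$, and then $\pa_t^\al y=g$. Substituting into the equation, the problem is equivalent to finding $g\in L^2(0,T)$ with
\[
g-Mg=f,\qquad Mg:=c\,J^\al g.
\]
Because $c\in C[0,T]\subset L^\infty(0,T)$, multiplication by $c$ is bounded on $L^2(0,T)$, and $J^\al$ is bounded on $L^2(0,T)$ by Young's convolution inequality; hence $M$ is a bounded linear operator on $L^2(0,T)$.

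Second, I would prove that $I-M$ is boundedly invertible by showing $M$ is quasi-nilpotent. Since the kernel of $J^\al$ is non-negative, $J^\al$ is order preserving and the pointwise bound $|Mg|\le\|c\|_{C[0,T]}\,J^\al|g|$ holds; iterating and using the semigroup law $J^\al J^\al=J^{2\al}$ gives $|M^ng|\le\|c\|_{C[0,T]}^n\,J^{n\al}|g|$ for all $n$. Young's inequality yields $\|J^\be\|_{L^2\to L^2}\le T^\be/\Ga(\be+1)$, so
\[
\|M^n\|_{L^2\to L^2}\le\f{(\|c\|_{C[0,T]}\,T^\al)^n}{\Ga(n\al+1)}\longrightarrow0\quad(n\to\infty).
\]
Therefore $\sum_{n\ge0}M^n$ converges in operator norm and provides $(I-M)^{-1}$, giving a unique $g\in L^2(0,T)$ and hence a unique $y=J^\al g\in H_\al(0,T)$; uniqueness is immediate because the homogeneous equation forces $(I-M)g=0$, i.e. $g=0$.

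Third, and this is the step I expect to be the main obstacle, is the positivity assertion, since the sign of $c$ is uncontrolled and $M$ itself is not positivity preserving. To handle this I would set $C_0:=\|c\|_{C[0,T]}$ and rewrite the equation as
\[
\pa_t^\al y+C_0\,y=(c(t)+C_0)\,y+f(t),
\]
where now $c+C_0\ge0$. Writing $L:=\pa_t^\al+C_0I$, the solution of $Ly=h$ in $H_\al(0,T)$ is the convolution of $h$ against the kernel $t^{\al-1}E_{\al,\al}(-C_0t^\al)$, which is non-negative for $0<\al<1$; hence $L^{-1}$ is positivity preserving. Setting $Ny:=L^{-1}[(c+C_0)y]$, the equation becomes $y=Ny+L^{-1}f$, and $N$ is again a Volterra operator of the type treated above, so $(I-N)^{-1}=\sum_{n\ge0}N^n$ converges. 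Since $c+C_0\ge0$ and $L^{-1}$ is positivity preserving, $N$ maps non-negative functions to non-negative functions; as $f\ge0$ gives $L^{-1}f\ge0$, each term $N^n(L^{-1}f)$ is non-negative, so $y=\sum_{n\ge0}N^n(L^{-1}f)\ge0$. By the uniqueness established above, this $y$ coincides with the solution of the original equation, completing the proof.
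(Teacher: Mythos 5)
Your proposal is correct, and it is a genuinely different (and more self-contained) route than the paper's. The paper disposes of Lemma~\ref{lem1} entirely by citation: unique existence is taken from Kubica--Ryszewska--Yamamoto \cite[\S 3.5]{KRY}, and the sign condition is taken from the comparison argument of \cite{LY1}, which rests on Luchko's extremum principle \cite{L2009}. You instead reduce the equation, via the isomorphism $J^\al:L^2(0,T)\to H_\al(0,T)$ already recorded in the paper, to the Volterra equation $(I-M)g=f$ with $Mg=c\,J^\al g$, and invert by a Neumann series using the quasi-nilpotency bound $\|M^n\|\le(\|c\|_{C[0,T]}T^\al)^n/\Ga(n\al+1)$; this part is complete and standard. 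For positivity, your shift $c\mapsto c+C_0$ with $C_0=\|c\|_{C[0,T]}$, so that the iteration operator $N=L^{-1}\circ(c+C_0)$ becomes positivity preserving, is exactly the right device; the one nontrivial external input is the non-negativity of the resolvent kernel $t^{\al-1}E_{\al,\al}(-C_0t^\al)$ for $0<\al<1$, which is the classical complete-monotonicity result of Pollard (see also \cite{L2009}) and should be cited rather than asserted. You should also say one line more on why $\sum_{n\ge0}N^n$ converges, but this is immediate from $0\le t^{\al-1}E_{\al,\al}(-C_0t^\al)\le t^{\al-1}/\Ga(\al)$, which gives $|N^ny|\le(2C_0)^nJ^{n\al}|y|$ and the same Mittag-Leffler-type bound as for $M$. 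What your approach buys is an explicit, constructive proof with no reliance on the extremum principle; what it costs is the dependence on Mittag-Leffler positivity, which is roughly of the same depth as the tool the paper imports.
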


\begin{proof}
The unique existence of $y$ is proved in
Kubica, Ryszewska and Yamamoto \cite[\S 3.5]{KRY} for example.
The non-negativity $y\ge0$ in $(0,T)$ follows from the same
argument in \cite{LY1}, which is based on the
extremum principle by Luchko \cite{L2009}.
\end{proof}

\begin{lem}\label{lem2}
Let $c_0$ and $a_0\ge0$ be constants and
$y-a_0,z-a_0\in H_\al(0,T)\cap C[0,T]$ satisfy
$$
\pa_t^\al(y-a_0)\ge-c_0y(t)+f(y(t)),\quad
\pa_t^\al(z-a_0)\le-c_0z(t)+f(z(t))\quad\mbox{in }(0,T).
$$
Then $y\ge z$ in $(0,T)$.
\end{lem}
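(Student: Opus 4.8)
The plan is to prove Lemma~\ref{lem2} by a comparison argument that reduces the semilinear inequality to the linear setting of Lemma~\ref{lem1}. Setting $w := y - z$, I would subtract the two differential inequalities to obtain
\[
\pa_t^\al w \ge -c_0(y - z) + \bigl(f(y) - f(z)\bigr) = -c_0 w + \bigl(f(y) - f(z)\bigr)
\quad\mbox{in }(0,T),
\]
where the initial data cancel since both $y - a_0$ and $z - a_0$ lie in $H_\al(0,T)$, so $w \in H_\al(0,T)$ with the correct (zero or trace) initial behaviour. The goal is then to show $w \ge 0$ in $(0,T)$. The convexity of $f$ (assumption~\eqref{1.4}) enters precisely here: by the mean value theorem, $f(y) - f(z) = f'(\eta)\,w$ for some intermediate value $\eta = \eta(t)$ between $y(t)$ and $z(t)$, so that
\[
\pa_t^\al w \ge \bigl(f'(\eta(t)) - c_0\bigr)\,w =: c(t)\,w
\quad\mbox{in }(0,T).
\]

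The key step is to recognise $c(t) := f'(\eta(t)) - c_0$ as a coefficient to which Lemma~\ref{lem1} can be applied. Because $y, z \in C[0,T]$ and $f \in C^1[0,\infty)$, the composition $f'(\eta(t))$ is at least bounded and measurable on $[0,T]$, so $c \in L^\infty(0,T)$; if continuity of $c$ is needed to match the hypotheses of Lemma~\ref{lem1} verbatim, I would instead bound $f'(\eta(t))$ by a constant $M := \sup_{0 \le \xi \le R} f'(\xi)$, where $R$ is an upper bound for $\|y\|_{C[0,T]}$ and $\|z\|_{C[0,T]}$, and replace the pointwise coefficient by the constant $\ov c := M - c_0$. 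Then I would consider the linear problem
\[
\pa_t^\al v - \ov c\, v = g(t),\quad v(0)=0,
\]
where $g := \pa_t^\al w - \ov c\, w \ge 0$ (using that $w$ multiplies a coefficient no larger than $\ov c$ on the favourable sign, which is where convexity guarantees $f'(\eta) \ge f'(z)$ and hence controls the sign of the correction term), and invoke the non-negativity assertion of Lemma~\ref{lem1} to conclude $w \ge 0$, i.e.\ $y \ge z$.

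The main obstacle I anticipate is the handling of the nonlinear term's sign when $w$ itself changes sign, since the factorisation $f(y)-f(z)=f'(\eta)w$ gives a coefficient $c(t)$ that multiplies $w$ but does not by itself force $g \ge 0$ unless one is careful about where $y < z$. The clean way around this is to use convexity in its tangent-line form: for convex $f$ one has $f(y) - f(z) \ge f'(z)(y-z)$, which yields $\pa_t^\al w \ge (f'(z) - c_0)w$ with a coefficient depending only on the \emph{subsolution} $z$ (hence genuinely in $C[0,T]$ via continuity of $z$ and $f'$), and this is exactly the linear comparison equation whose non-negative-data principle from Lemma~\ref{lem1} delivers $w \ge 0$. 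I would therefore route the argument through this tangent-line inequality rather than the mean-value form, since it produces a legitimate continuous coefficient $c(t) = f'(z(t)) - c_0$ and a manifestly non-negative forcing term, letting Lemma~\ref{lem1} close the proof directly.
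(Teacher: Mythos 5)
Your final argument --- the tangent-line route --- is correct, but it is genuinely different from the paper's proof, and the comparison is instructive. The paper sets $\te:=y-z$ and linearises \emph{exactly}, defining $c(t)$ as the difference quotient $\f{f(y(t))-f(z(t))}{y(t)-z(t)}$ when $y(t)\ne z(t)$ and as $f'(y(t))$ otherwise; the bulk of its proof is the case analysis needed to verify the identity $c(t)\te(t)=f(y(t))-f(z(t))$ at every point and the continuity of $c$, after which Lemma~\ref{lem1} applies. Notably, that argument uses only $f\in C^1$ and never invokes convexity. Your route instead uses convexity in the form $f(y)-f(z)\ge f'(z)(y-z)$, which holds \emph{regardless of the sign of $y-z$} and immediately gives $\pa_t^\al\te-(f'(z(t))-c_0)\te\ge0$ with a coefficient that is manifestly continuous because $z\in C[0,T]$ and $f\in C^1$; Lemma~\ref{lem1} then closes the proof with no case analysis. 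This is cleaner and shorter, at the cost of consuming the convexity hypothesis \eqref{1.4} (harmless here, since it is assumed globally and is needed anyway for Jensen's inequality in Step~1 of the main proof). One small caveat: the tangent-line inequality needs $y(t),z(t)$ to lie in $[0,\infty)$ where $f$ is defined and convex; this is implicit in the lemma (as it is in the paper's use of $f'(y(t))$) and is satisfied in the application.

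A word of caution about the middle of your write-up: the detour through a constant coefficient $\ov c:=M-c_0$ with $M=\sup f'$ does not work as stated. Replacing a variable coefficient $c(t)\le\ov c$ multiplying $w$ by $\ov c$ preserves the sign of the forcing term only where $w\le0$, which is the opposite of what you are trying to prove, so the claim $g=\pa_t^\al w-\ov c\,w\ge0$ is circular; likewise the parenthetical ``convexity guarantees $f'(\eta)\ge f'(z)$'' is false when $y<z$ (then $\eta\le z$, so $f'(\eta)\le f'(z)$). Since you explicitly discard this route in favour of the tangent-line inequality, the proof you ultimately propose stands, but the final version should omit the constant-coefficient paragraph entirely.
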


\begin{proof}
We set
$$
\pa_t^\al(y-a_0)+c_0y(t)-f(y(t)) =:F\ge0,\quad
\pa_t^\al(z-a_0)+c_0z(t)-f(z(t)) =:G\le0.
$$
Since $y-a_0,z-a_0\in H_\al(0,T)\cap C[0,T]$,
we see that $F,G\in L^2(0,T)$. Setting
$$
\te:=y-z=(y-a_0)-(z-a_0)\in H_\al(0,T),
$$
we have
$$
\pa_t^\al\te(t)+c_0\te(t)-(f(y(t))-f(z(t)))=F-G\ge0
\quad\mbox{in }(0,T).
$$
We can further prove that
\begin{equation}\label{2.1}
\pa_t^\al\te(t)-(c(t)-c_0)\te(t)\ge0,\quad0<t<T,
\end{equation}
where
\begin{equation}\label{2.2}
c(t):=\left\{\!\begin{alignedat}{2}
&\f{f(y(t))-f(z(t))}{y(t)-z(t)}, & \quad & y(t)\ne z(t),\\
& f'(y(t)), & \quad & y(t)=z(t).
\end{alignedat}\right.
\end{equation}
Indeed, we set $\La:=\{t\in [0,T]\mid y(t)\ne z(t)\}$.
For $t_0\in\La$, we immediately see that
$c(t_0)\te(t_0)= f(y(t_0))-f(z(t_0))$.
For $t_0\notin\La$, that is,
$$
\te(t_0)=(y(t_0)-a_0)-(z(t_0)-a_0)=0,
$$
first we assume that there does not exist any sequence $\{t_n\}\subset
\La$ such that $t_n\to t_0$. Then there exists some small $\ve_0>0$ 
such that
$(t_0-\ve_0,t_0+\ve_0)\cap\La=\emptyset$. This means
$\te(t)=0$ for $t_0-\ve_0<t<t_0+\ve_0$ and thus 
$$
c(t)\te(t)=f'(y(t))\te(t)=0,\quad f(y(t))-f(z(t)) =0,
\quad t_0-\ve_0<t<t_0+\ve_0.
$$
Hence, we obtain $c(t_0)\te(t_0)=f(y(t_0))-f(z(t_0))$.

Next, assume that there exists a sequence $\{t_n\}\subset\La$ such that
$t_n\longrightarrow t_0\notin\La$ as $n\to\infty$.
By $t_n\in\La$, we have $y(t_n)\ne z(t_n)$ and 
$$
c(t_n)\te(t_n)=\f{f(y(t_n))-f(z(t_n))}{y(t_n)-z(t_n)}\te(t_n),
\quad n\in\BN.
$$
Since $y,z,\te\in C[0,T]$ and $\te(t_0)=0$,
we employ the mean value theorem to conclude
$$
\lim_{n\to\infty}\f{f(y(t_n))-f(z(t_n))}{y(t_n)-z(t_n)}\te(t_n)
= f'(y(t_0))\te(t_0)=0.
$$
Hence, again we arrive at $c(t_0)\te(t_0)=f(y(t_0))-f(z(t_0))$
also in this case.
Thus we have verified \eqref{2.1} with \eqref{2.2}.
Moreover, since $y,z\in C[0,T]$, we can verify that $c\in C[0,T]$.

Therefore, a direct application of Lemma \ref{lem1} to \eqref{2.1} yields
$\te\ge0$ in $(0,T)$ or equivalently $y\ge z$ in $(0,T)$.
Thus the proof of Lemma \ref{lem2} is complete.
\end{proof}

Finally we show

\begin{lem}\label{lem3}
Let $q>0$ be a constant. Then
\[
\rd_t^\al\left(\left(\f T{T-t}\right)^q\right)
\le\f q{\Ga(2-\al)\,T^\al}\left(\f T{T-t}\right)^{q+1}.
\]
\end{lem}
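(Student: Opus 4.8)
The plan is to compute the Caputo derivative directly from its integral definition and then reduce the resulting double-singular integral to an elementary one by exploiting monotonicity. Writing $g(t):=\left(\f T{T-t}\right)^q=T^q(T-t)^{-q}$, I first differentiate to get $g'(s)=qT^q(T-s)^{-q-1}$, so that by the definition of $\rd_t^\al$,
$$
\rd_t^\al g(t)=\f{qT^q}{\Ga(1-\al)}\int_0^t(t-s)^{-\al}(T-s)^{-q-1}\,\rd s.
$$
The integrand carries two singularities---one of Abel type at $s=t$ coming from $(t-s)^{-\al}$, and one at the blow-up time $s=T$ coming from $(T-s)^{-q-1}$---and the whole point is to decouple them.

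The key step is to observe that for $0\le s\le t<T$ we have $T-s\ge T-t>0$, hence $(T-s)^{-q-1}\le(T-t)^{-q-1}$ since $q+1>0$. Pulling this factor out of the integral gives
$$
\int_0^t(t-s)^{-\al}(T-s)^{-q-1}\,\rd s\le(T-t)^{-q-1}\int_0^t(t-s)^{-\al}\,\rd s.
$$
The remaining integral is elementary: substituting $u=t-s$ yields $\int_0^t(t-s)^{-\al}\,\rd s=\f{t^{1-\al}}{1-\al}$, and since $0<t<T$ and $1-\al>0$ I further bound $t^{1-\al}\le T^{1-\al}$.

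Combining these estimates, I obtain
$$
\rd_t^\al g(t)\le\f{qT^q}{\Ga(1-\al)}\cdot\f{T^{1-\al}}{1-\al}\,(T-t)^{-q-1}
=\f{qT^{q+1-\al}}{(1-\al)\Ga(1-\al)}\,(T-t)^{-q-1}.
$$
The final step uses the functional equation $\Ga(2-\al)=(1-\al)\Ga(1-\al)$, which collapses the denominator into $\Ga(2-\al)$ and recovers exactly $\f q{\Ga(2-\al)\,T^\al}\left(\f T{T-t}\right)^{q+1}$. I do not anticipate a genuine obstacle here; the only thing to get right is the monotonicity bound on $(T-s)^{-q-1}$, which is precisely what separates the two singular factors and turns the remaining computation into a one-line fractional-integral evaluation together with the crude but sufficient estimate $t^{1-\al}\le T^{1-\al}$.
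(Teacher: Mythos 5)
Your proof is correct, and it takes a genuinely different and more elementary route than the paper's. You bound the integrand of
\[
\rd_t^\al g(t)=\f{q T^q}{\Ga(1-\al)}\int_0^t(t-s)^{-\al}(T-s)^{-q-1}\,\rd s
\]
pointwise via the monotonicity $(T-s)^{-q-1}\le(T-t)^{-q-1}$ for $0\le s\le t$, reduce to the elementary integral $\int_0^t(t-s)^{-\al}\rd s=t^{1-\al}/(1-\al)$, and finish with $t^{1-\al}\le T^{1-\al}$ and $(1-\al)\Ga(1-\al)=\Ga(2-\al)$; I checked that the resulting constant matches the claimed right-hand side exactly. The paper instead expands $(1-t/T)^{-q-1}$ as a Maclaurin series, integrates term by term using $\int_0^t(t-s)^{-\al}s^k\,\rd s=\f{\Ga(1-\al)\,k!}{\Ga(k+2-\al)}t^{k+1-\al}$, bounds $\Ga(k+2-\al)\ge\Ga(2-\al)\,k!$, and resums the series (also using $t^{1-\al}\le T^{1-\al}$ implicitly). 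Your argument is shorter and avoids the series manipulation and the gamma-function lower bound entirely; what the paper's computation buys is an exact series representation of $\rd_t^\al\bigl((T/(T-t))^q\bigr)$ that retains the factor $t^{1-\al}$ (vanishing at $t=0$) before the final crude estimate, which could in principle yield a sharper bound near $t=0$, though none of that is needed for the lemma as stated.
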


\begin{proof}
Recall the Maclaurin expansion of $(1+\ze)^\be$
with a constant $\be\in\BR$:
\[
(1+\ze)^\be=\sum_{k=0}^\infty\f{\be(\be-1)\cdots(\be-k+1)}{k!}
\ze^k,\quad|\ze|<1.
\]
Then putting $\ze=-\f t T$ and $\be=-q-1$ gives
\begin{equation}\label{eq-maclaurin}
\left(\f T{T-t}\right)^{q+1}=\left(1-\f t T\right)^{-q-1}
=\sum_{k=0}^\infty\f{(q+1)\cdots(q+k)}{k!}\left(\f t T\right)^k,
\quad0<t<T.
\end{equation}
Then by
\[
\f\rd{\rd t}\left(\left(\f T{T-t}\right)^q\right)
=\f q T\left(\f T{T-t}\right)^{q+1}=\f q T\sum_{k=0}^\infty
\f{(q+1)\cdots(q+k)}{k!}\left(\f t T\right)^k,
\]
we calculate
\begin{align*}
\rd_t^\al\left(\left(\f T{T-t}\right)^q\right)
& =\int_0^t\f{(t-s)^{-\al}}{\Ga(1-\al)}
\f\rd{\rd s}\left(\left(\f T{T-s}\right)^q\right)\rd s\\
& =\f q{\Ga(1-\al)\,T}\sum_{k=0}^\infty
\f{(q+1)\cdots(q+k)}{k!\,T^k}\int_0^t(t-s)^{-\al}s^k\,\rd s.
\end{align*}
Here by
\[
\int_0^t(t-s)^{-\al}s^k\,\rd s
=\f{\Ga(1-\al)\,k!}{\Ga(k+2-\al)}t^{k+1-\al},
\]
we have
\begin{align}
\rd_t^\al\left(\left(\f T{T-t}\right)^q\right)
& =\f{q\,t^{1-\al}}T\sum_{k=0}^\infty
\f{(q+1)\cdots(q+k)}{\Ga(k+2-\al)}\left(\f t T\right)^k\nonumber\\
& \le\f q{\Ga(2-\al)\,T^\al}\sum_{k=0}^\infty
\f{(q+1)\cdots(q+k)}{k!}\left(\f t T\right)^k,\label{eq-caputo-est}
\end{align}
where we estimate $\Ga(k+2-\al)$ from below as
\[
\Ga(k+2-\al)\ge\left.\begin{cases}
\Ga(2-\al), & k=0,\\
\Ga(k+1)=k!, & k\in\BN
\end{cases}\right\}\ge\Ga(2-\al)\,k!.
\]
Comparing the right-hand side of \eqref{eq-caputo-est} with
\eqref{eq-maclaurin} immediately yields the desired inequality.
\end{proof}

\section{Proof of Theorem \ref{thm1}}\label{sec-proof}

Recall that we assumed the existence of the solution $u$ to
\eqref{1.1} or \eqref{1.2} within the class \eqref{1.3}.\medskip

{\bf Step 1.} We set
$$
\eta(t):=\int_\Om u(x,t)\vp_1(x)\,\rd x
=\int_\Om(u(x,t)-a(x))\vp_1(x)\,\rd x+a_0,\quad0<t<T,
$$
where $a_0:=\int_\Om a(x)\vp_1(x)\,\rd x$.
Here we see $a_0>0$ from \eqref{1.7}--\eqref{1.8}.

By \eqref{1.8} and $\vp_1\in L^\infty(\Om)$,
we have $\int_\Om(u(x,t)-a(x))\vp_1(x)\,\rd x\in H_\al(0,T)$.
Fixing $\ve>0$ arbitrarily small, we see
$$
\eta(t)-a_0=\int_\Om(u(x,t)-a(x))\vp_1(x)\,\rd x\in H_\al(0,T-\ve)
$$
and hence
$$
\pa_t^\al(\eta(t)-a_0)=\int_\Om\pa_t^\al(u-a)(x,t)\vp_1(x)\,\rd x,
\quad0<t<T-\ve.
$$
Since $\pa_{\nu_A}u+\si u=\pa_{\nu_A}\vp_1+\si\vp_1=0$
or $u=\vp_1=0$ on $\pa\Om\times(0,T-\ve)$, Green's 
formula and the governing equation $\pa_t^\al(u-a)=-Au+f(u)$ yield
\begin{align*}
\pa_t^\al(\eta-a_0) & =-\int_\Om A u(x,t)\vp_1(x)\,\rd x
+\int_\Om f(u(x,t))\vp_1(x)\,\rd x\\
& =-\int_\Om u(x,t)A^*\vp_1(x)\,\rd x
+\int_\Om f(u(x,t))\vp_1(x)\,\rd x\\
& =-\la_1\int_\Om u(x,t)\vp_1(x)\,\rd x+\int_\Om f(u(x,t))\vp_1(x)\,\rd x\\
& =-\la_1\eta(t)+\int_\Om f(u(x,t))\vp_1(x)\,\rd x.
\end{align*}
Here we used $A^*\vp_1=\la_1\vp_1$ in $\Om$.

In terms of $\int_\Om\vp_1(x)\,\rd x=1$ by \eqref{1.7}
and the convexity of $f$ by \eqref{1.4},
we can apply Jensen's inequality to have
$$
\int_\Om f(u(x,t))\vp_1(x)\,\rd x\ge
f\left(\int_\Om u(x,t)\vp_1(x)\,\rd x\right)=f(\eta(t)),\quad0<t<T.
$$
Therefore, we obtain
\begin{equation}\label{3.1}
\pa_t^\al(\eta(t)-a_0)\ge-\la_1\eta(t)+f(\eta(t)),\quad0<t<T.
\end{equation}

{\bf Step 2.} This step is devoted to the construction of
a lower solution $\un\eta(t)$ satisfying
\begin{equation}\label{3.2}
\pa_t^\al(\un\eta(t)-a_0)(t)\le-\la_1\un\eta(t)+f(\un\eta(t)),
\quad0<t<T-\ve,\quad\lim_{t\uparrow T}\un\eta(t)=\infty.
\end{equation}
Owing to the assumption \eqref{1.5}, we know that
\eqref{3.2} is guaranteed by
\begin{equation}\label{eq-lower}
\pa_t^\al(\un\eta(t)-a_0)(t)\le-\la_1\un\eta(t)+c_0\,\un\eta^p(t),
\quad0<t<T-\ve,\quad\lim_{t\uparrow T}\un\eta(t)=\infty.
\end{equation}

As a candidate of a lower solution, we choose
$$
\un\eta(t):=a_0\left(\f T{T-t}\right)^q,\quad q>0
$$
Then Lemma \ref{lem3} implies
\begin{equation}\label{3.3}
\pa_t^\al(\un\eta(t)-a_0)=\rd_t^\al\un\eta(t)
=a_0\rd_t^\al\left(\left(\f T{T-t}\right)^q\right)
\le\f{a_0q}{\Ga(2-\al)\,T^\al}
\left(\f T{T-t}\right)^{q+1}.
\end{equation}
We divide the proof into 2 cases according to the sign of $\la_1$.\medskip

{\bf Case 1.} If $\la_1\le0$, then in terms of \eqref{3.3}
and $\eta(t)\ge0$, it turns out that \eqref{eq-lower} is guaranteed by
$$
\f{a_0q}{\Ga(2-\al)\,T^\al}\left(\f T{T-t}\right)^{q+1}
\le c_0\,\un\eta^p(t)=c_0a_0^p\left(\f T{T-t}\right)^{p q},\quad0<t<T
$$
or equivalently
$$
\f q{\Ga(2-\al)\,T^\al}\xi^{q+1}\le c_0a_0^{p-1}\xi^{p q},\quad\xi\ge1
$$
by changing $\xi:=\f T{T-t}$ for $0<t<T$.
This is guaranteed by
\[
q+1\le p q,\quad\f q{\Ga(2-\al)\,T^\al}\le c_0a_0^{p-1}.
\]
To minimize the lower bound of the choice of $T$,
we fix $q=\f1{p-1}$ and thus
\[
T\ge\left\{(p-1)\Ga(2-\al)\,c_0a_0^{p-1}\right\}^{-\f1\al}=:T_a^*.
\]
Therefore, the choice 
$$
\un\eta(t):=a_0\left(\f{T^*_a}{T^*_a-t}\right)^{\f1{p-1}}
$$
satisfies \eqref{3.2}. Then we can apply Lemma \ref{lem2} to
\eqref{3.1} and \eqref{3.2} and obtain
\begin{equation}\label{3.5}
\un\eta(t)\le\eta(t),\quad0<t<\min\{T^*_a,T_a\}.
\end{equation}

Now it remains to verify $T_a\le T_a^*$,
where $T_a$ was defined in \eqref{1.6}.
If $T_a>T_a^*$ instead, then we see
\[
\|u(\,\cdot\,,T_a^*)\|_{L^1(\Om)}<\infty
\]
by \eqref{1.6}. Moreover, \eqref{3.5} implies
$$
\un\eta(t)\le\eta(t),\quad0<t< T^*_a
$$
and hence $\lim_{t\uparrow T_a^*}\eta(t) =\infty$.
On the other hand, since
$$
\eta(t)\le\|\vp_1\|_{L^\infty(\Om)}\|u(\,\cdot\,,t)\|_{L^1(\Om)},
\quad0<t< T_a^*=\min\{T^*_a,T_a\},
$$
we have $\eta(T_a^*) <\infty$, which is a contradiction.
This concludes $T_a\le T_a^*$.\medskip

{\bf Case 2.} If $\la_1>0$, then similarly to Case 1,
it turns out that \eqref{eq-lower} is guaranteed by
$$
\f{a_0q}{\Ga(2-\al)\,T^\al}\left(\f T{T-t}\right)^{q+1}
\le-\la_1a_0\left(\f T{T-t}\right)^q
+c_0a_0^p\left(\f T{T-t}\right)^{p q},\quad0<t<T
$$
or equivalently
$$
\f q{\Ga(2-\al)\,T^\al}\xi^{q+1}\le c_0a_0^{p-1}\xi^{p q}
-\la_1\xi^q,\quad\xi\ge1.
$$
Choosing $q=\f1{p-1}$ again and dividing by $\xi^{q+1}$,
we obtain
\[
\f1{(p-1)\Ga(2-\al)\,T^\al}\le c_0a_0^{p-1}-\f{\la_1}\xi,
\quad\xi\ge1,
\]
which is guaranteed by
\[
\f1{(p-1)\Ga(2-\al)\,T^\al}\le c_0a_0^{p-1}-\la_1.
\]
Thanks to the assumption \eqref{eq-asp-a0},
we see $c_0a_0^{p-1}-\la_1>0$. Consequently, if
$$
T\ge\left\{(p-1)\Ga(2-\al)(c_0a_0^{p-1}-\la_1)\right\}^{-\f1\al}=:T^*_a,
$$
then $\un\eta(t)\le\eta(t)$ for $0<t<\min\{T_a,T^*_a\}$.
The verification of $T_a\le T^*_a$ is identical to that in Case 1.
Thus the proof of Case 2 and so the proof of Theorem \ref{thm1} are completed.\bigskip

{\bf Acknowledgements }
The second author is supported by Grant-in-Aid for Early-Career Scientists
22K13954, Japan Society for the Promotion of Science (JSPS).
The third author is supported by Grant-in-Aid for Scientific Research (A)
20H00117 and Grant-in-Aid for Challenging Research (Pioneering) 21K18142, JSPS.


\end{document}